\shorttitle}
\@nx\MakeUppercase{\the\toks@}}
\patchcmd\@settitle{\uppercasenonmath\@title}{\Large}{}{}
\authors}
\newtheorem{theorem}{Theorem}[section]
\newtheorem{question}[theorem]{Question}
\newtheorem{proposition}{Proposition}[section]
\newtheorem{corollary}[proposition]{Corollary}
\newtheorem{lemma}{Lemma}[section]
\newtheorem{remark}[lemma]{Remark}
\newtheorem{example}[lemma]{Example}
\numberwithin{equation}{section}
\newcommand{\beq}{\begin{eqnarray}}
	\newcommand{\eeq}{\end{eqnarray}}
\newcommand{\beqn}{\begin{eqnarray*}}
	\newcommand{\eeqn}{\end{eqnarray*}}
\newcommand{\rar}{\rightarrow}
\newcommand*{\Ge}{\geqslant}
\newcommand*{\Le}{\leqslant}
\begin{document}
	\title[On the abscissas of a Dirichlet series and its subseries supported on prime factorization]
		{On the abscissas of a Dirichlet series and its subseries supported on prime factorization}
	
	\author[C. K. Sahu]{Chaman Kumar Sahu}
	\address{Department of Mathematics, Indian Institute of Technology Bombay,
		Mumbai 400076, India}
	\email{sahuchaman9@gmail.com, chamanks@math.iitb.ac.in}

%
	
	\subjclass[2020]{Primary 30B50, 46E22.}
	\keywords{Dirichlet series, Absolute convergence}
	
	\maketitle
	%

	\begin{abstract}
		For a sequence $\{a_n\}_{n \Ge 1} \subseteq (0, \infty)$ and a Dirichlet series $f(s) = \sum_{n=1}^\infty a_n n^{-s},$ let $\sigma_a(f)$ denote the abscissa of absolute convergence of $f$ and let
		\beqn
		\delta_a(f): = \inf\Bigg\{\Re(s) : \sum\limits_{\substack{j= 1 \\\tiny{\mbox{\bf gpf}}(j) \Le p_n }}^\infty a_j j^{-\Re(s)} < \infty ~\text{for all}~ n \Ge 1\Bigg\},  
		\eeqn
		where $\{p_j\}_{j \Ge 1}$ is an increasing enumeration of prime numbers and $\text{\bf gpf}(n)$ denotes the greatest prime factor of an integer $n \Ge 2.$ 
		One significant aspect of these abscissas is their crucial role in analyzing the multiplier algebra of Hilbert spaces associated with diagonal Dirichlet series kernels.
        The main result of this paper establishes that  $\sigma_a(f)- \delta_a(f)$ can be made arbitrarily large, meaning that it can be equal to any non-negative real number. As an application, we determine the multiplier algebra in some cases and, in others, gain insights into the structure of the multiplier algebra of certain Hilbert spaces of Dirichlet series.
	\end{abstract}


	\section{Introduction}
	
	Let $\mathbb N,$~$\mathbb R$ and $\mathbb C$ denote the sets of positive integers, real and complex numbers, respectively. 
	For $s \in \mathbb C,$ let $\Re(s),$ $|s|,$ $\overline{s}$ and $\arg(s)$ denote the real part, the modulus, the complex conjugate and the argument of $s,$ respectively. 
	For $\rho \in \mathbb R,$ let $\mathbb H_\rho$ denote the right half-plane $\{s \in \mathbb C : \Re(s) > \rho\}.$ 
	
	A {\it Dirichlet series} is a series of the form
	\beqn
	f(s) = \sum_{j=1}^{\infty} a_{j} j^{-s},
	\eeqn
	where $a_j \in \mathbb C,~j \Ge 1$ and $s$ is a complex variable. 
	If $a_j = 1$ for all $j \Ge 1,$ then we have the Riemann zeta function, denoted by $\zeta.$   
	If $f$ is convergent at $s=s_{0}$, then it converges 
	uniformly throughout the angular region $\{s \in \mathbb C : |\arg(s-s_0)| < \frac{\pi}{2} -\delta\}$ for every positive real number $\delta < \frac{\pi}{2}.$ Consequently, $f$ defines a holomorphic function on $\mathbb H_{\Re(s_0)}$ (refer to \cite[Chapter~IX]{Ti} for the basic theory of Dirichlet series). 	
	
	The abscissa of {\it absolute convergence} $\sigma_{a}(f)$ of $f$ is the extended real number defined by
	\beqn
	\sigma_{a}(f) = \inf \Big\{\Re(s) : \sum_{n=1}^{\infty} |a_{n}| n^{-\Re(s)} < \infty \Big\}.
	\eeqn
	For an integer $n \Ge 2,$ let $\text{\bf gpf}(n)$ denote the greatest prime factor of $n,$ and let $\text{\bf gpf}(1) = 1.$ Define
	\beqn
	\delta_a(f) := \inf\Bigg\{\Re(s) : \sum\limits_{\substack{j= 1 \\\tiny{\mbox{\bf gpf}}(j) \Le p_n }}^\infty |a_j| j^{-\Re(s)} < \infty ~\text{for all}~ n \in \mathbb N\Bigg\},  
	\eeqn
	where $\{p_j\}_{j=1}^\infty$ is the increasing enumeration of prime numbers. For every integer $n \Ge 1,$ let
	\beqn
	f_n(s): = \sum\limits_{\substack{j= 1 \\\tiny{\mbox{\bf gpf}}(j) \Le p_n }}^\infty a_j j^{- s}.
	\eeqn
	Then, by definition, $\delta_a(f) = \sup_{n \Ge 1}\sigma_a(f_n)$. Since $\sigma_a(f_n) \Le \sigma_a(f_{n+1})$ for all integers $n \Ge 1$, it follows that
	\beq
	\label{eventually-equal-lemma}
	\delta_a(f) = \sup_{n \Ge m}\sigma_a(f_n), \quad \text{for any}~m \Ge 1.
	\eeq 
		Let $\mathcal D$ denote the set of all functions which are representable by a convergent Dirichlet series in some right half plane. 


If $f \in \mathcal D$ has all positive coefficients, then 
$\delta_a(f) \Le \sigma_a(f).$ One of the importance of these abscissas lies in the fact that they appear in the study of Hilbert space associated to diagonal Dirichlet series kernels and its multiplier algebra (see, \cite{Sa}, \cite[Theorem~3.1]{St} and Proposition~\ref{stetler_result_repeat}).
Thus, the following natural question arises.
	\begin{question}\label{Q-1}
		For $r \Ge 0,$ does there exist $f \in \mathcal D$ with positive coefficients such that $\sigma_a(f) - \delta_a(f) = r?$ 
	\end{question}
 If $r =1,$ Question~\ref{Q-1} has an affirmative answer. Indeed, consider $\zeta(s) = \sum_{n=1}^\infty n^{-s},$ which has all coefficients equal to $1.$ Then, $\sigma_a(\zeta) = 1,$ and for every $n \in \mathbb N,$ since the subseries 
	\beq \label{zeta-n}
\zeta_n(s): = 	\sum\limits_{\substack{j= 1 \\\tiny{\mbox{\bf gpf}}(j) \Le p_n }}^\infty j^{- s} = \prod_{j=1}^n \frac{1}{1-p_j^{-s}}
	\eeq
	converges on $\mathbb H_0$ (see \cite[Lemma~3.2]{St}) and diverges at $0$, it follows that $\delta_a(\zeta) = 0.$ Therefore, $\sigma_a(\zeta) - \delta_a(\zeta) = 1.$
  From the definitions of both abscissas, one might naturally expect that the difference between these abscissas can not be too large. However, in relation to Question~\ref{Q-1}, we establish a surprising result in this context, as presented below.
 
	\begin{theorem}\label{arbitrary-large} The following statements are true.
	\begin{itemize}
		\item [$(i)$] For every real number $r \Ge 0$, there exists a Dirichlet series $f$ with positive coefficients such that $\sigma_a(f)-\delta_a(f) = r$. 
		\item [$(ii)$] If $r \Ge 1,$ there exists  a Dirichlet series $f$ with multiplicative coefficients $($i.e.,  $a_{mn} = a_ma_n$ whenever $\gcd(m,n)=1)$ such that $\sigma_a(f)-\delta_a(f) = r$.
	\end{itemize}
	\end{theorem}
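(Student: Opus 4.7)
The guiding principle is that $\delta_a(f)$ only senses the coefficients $a_j$ indexed by $j$ whose greatest prime factor is bounded, whereas $\sigma_a(f)$ is sensitive to the whole sequence. Hence, loading additional positive mass on $a_{p_n}$ for $n$ large will increase $\sigma_a(f)$ arbitrarily without altering $\delta_a(f)$, and the whole plan is to realise this idea by explicit coefficient choices.

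For part $(i)$, given $r \geq 0$, I plan to set $a_j = 1$ if $j = 2^k$ for some $k \geq 0$; $a_j = p_n^{r-1}$ if $j = p_n$ for some $n \geq 2$; and $a_j = j^{-2}$ otherwise. All coefficients are strictly positive. Splitting $\sum_{j \geq 1} a_j j^{-s}$ according to this partition, the three sub-series have abscissas $0$ (a geometric series in $2^{-s}$), $r$ (since the prime-indexed sum $\sum_{n \geq 2} p_n^{-(s-r+1)}$ converges iff $s > r$), and at most $-1$ (dominated by $\zeta(s+2)$); hence $\sigma_a(f) = r$. For any fixed $m \geq 1$, the subseries over $j$ with $\text{\bf gpf}(j) \leq p_m$ retains the full geometric piece, keeps only a finite block of primes $p_2, \ldots, p_m$, and has residual tail dominated by $\prod_{i=1}^{m} (1 - p_i^{-(s+2)})^{-1}$. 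This converges precisely for $s > 0$ (the binding constraint coming from the geometric piece), giving $\delta_a(f) = 0$ and $\sigma_a(f) - \delta_a(f) = r$.

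For part $(ii)$, given $r \geq 1$, I plan to take the multiplicative sequence with $a_1 = 1$ and $a_{p^k} = p^{r-1}$ for every prime $p$ and every $k \geq 1$; equivalently $a_n = \text{rad}(n)^{r-1}$, which is positive. The local Euler factor at $p_n$ is $L_n(s) = 1 + p_n^{r-1-s}/(1 - p_n^{-s})$, which is finite iff $s > 0$. The subseries over $\{j : \text{\bf gpf}(j) \leq p_m\}$ is exactly the finite product $\prod_{n=1}^{m} L_n(s)$, so it converges exactly for $s > 0$ and $\delta_a(f) = 0$. For the full series I will invoke the standard fact that, for non-negative multiplicative Dirichlet series, the abscissa of absolute convergence agrees with that of the Euler product $\prod_n L_n(s)$. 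Since $(1 - p_n^{-s})^{-1}$ is bounded between $1$ and $(1 - 2^{-s})^{-1}$ for every $s > 0$, convergence of the product is equivalent to $\sum_n p_n^{r-1-s} < \infty$, i.e.\ $s > r$; hence $\sigma_a(f) = r$.

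The step that needs the most care is the Euler-product identification in $(ii)$, which is what permits a clean computation of $\sigma_a(f)$ despite the rigidity imposed by multiplicativity; the remaining steps in both parts reduce to elementary comparison arguments with the Riemann and prime-zeta functions. A minor point in $(i)$ is that strict positivity of the coefficients forces the cushioning term $j^{-2}$ on the residual indices, but the exponent is cosmetic and any sufficiently rapidly decaying positive tail would serve equally well.
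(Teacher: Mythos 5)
Your proposal is correct, but it departs from the paper's proof in both parts, in ways worth recording. In part (i), the paper also concentrates mass $p^{r-1}$ on prime powers, via $g(s)=\sum_n p_n^{r-1-s}/(1-p_n^{-s})$, but then forms $f(s)=\zeta(s-r+1)\,(1+g(s))$ solely to make every coefficient strictly positive; this costs an appeal to the theorem on products of absolutely convergent Dirichlet series and a computation of the product coefficients $c_m$. Your three-way assignment (coefficient $1$ on powers of $2$, coefficient $p_n^{r-1}$ on odd primes, coefficient $j^{-2}$ elsewhere) achieves strict positivity directly, and since all terms are non-negative, the abscissa computations reduce to comparing three sub-series of abscissas $0$, $r$ and at most $-1$; this is more elementary than the paper's route and equally rigorous. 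In part (ii) your sequence is exactly the paper's ($c_{p^k}=p^{r-1}$ extended multiplicatively) and your identification of the subseries with the finite product $\prod_{n=1}^m L_n(s)$, giving $\delta_a(f)=0$, coincides with the paper's. The difference is the upper bound $\sigma_a(f)\leq r$: the paper uses $c_{p^m}=p^{r-1}\leq p^{(r-1)m}$, hence $c_n\leq n^{r-1}$, and this is precisely where the hypothesis $r\geq 1$ enters; you instead invoke the equivalence, for non-negative multiplicative coefficients, of convergence of $\sum_n c_n n^{-\sigma}$ and convergence of the Euler product $\prod_p\bigl(1+\sum_{k\geq 1}c_{p^k}p^{-k\sigma}\bigr)$. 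You should include its short proof rather than only cite it --- for one direction, $\sum_{n\leq N}c_n n^{-\sigma}\leq\prod_{p\leq N}\bigl(1+\sum_{k\geq 1}c_{p^k}p^{-k\sigma}\bigr)$ by the fundamental theorem of arithmetic and non-negativity, while the partial products increase to the full product; for the other, each finite product equals the subseries over integers all of whose prime factors are at most $N$, hence is bounded by the full sum --- but the fact is standard and correct. Notably, your argument nowhere uses $r\geq 1$: the product converges exactly when $\sum_n p_n^{r-1-\sigma}<\infty$, i.e.\ $\sigma>r$, for every $r\geq 0$. So your proof of (ii) actually produces the multiplicative example for all $r\geq 0$, which settles the point left open in the paper's Remark~2.2 (where the value of $\sigma_a(f)$ for this sequence is declared unknown when $r\in[0,1)$): it equals $r$ there as well.
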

A proof of Theorem~\ref{arbitrary-large} is given in Section~\ref{S2}. Regarding Theorem~\ref{arbitrary-large}(i), we present two approaches for constructing Dirichlet series in the case $r=0$. The second approach appears quite natural, as it encompasses many interesting examples (see, Example~\ref{example-inte}).
	\section{Proof of Theorem~\ref{arbitrary-large}}\label{S2}

%

	\begin{proof}[\textbf{Proof of Theorem~\ref{arbitrary-large}}]
	(i) Let $r \Ge 0$ and consider the Dirichlet series $g$ defined as  
	\beqn
	g(s) = \sum_{n=1}^\infty \frac{p_n^{r-1-s}}{1 - p_n^{-s}},
	\eeqn
	where $\{ p_n \}_{n=1}^{\infty}$ denotes the increasing enumeration of all prime numbers. Define $g_n$ as the subseries of $g$ consisting of terms supported on integers that can be factorized using only the first $n$ prime numbers. Then, $g_n$ takes the form  
	\beqn
	g_n(s) = \sum_{j=1}^{n} \frac{p_j^{r-1-s}}{1 - p_j^{-s}}.
	\eeqn	
	Since $\sigma_a(g_n) = 0$ for every integer $n \Ge 1$, by \eqref{eventually-equal-lemma}, it follows that $\delta_a(g) = 0$.  
	Next, we determine $\sigma_a(g)$. To this end, observe that applying \cite[Eq.~(0.4)]{DGMS} yields  
	\beqn
	g(\sigma) \Le \frac{1}{1 - 2^{-\sigma}} \sum_{n=1}^{\infty} p_n^{r-1-\sigma} < \infty, \quad \text{for } \sigma > r.
	\eeqn
	This establishes that $\sigma_a(g) \Le r$.  
	Furthermore,  
	\beqn
	g(r) = \sum_{n=1}^{\infty} \frac{1}{p_n - p_n^{1-r}} \Ge \sum_{n=1}^{\infty} \frac{1}{p_n}.
	\eeqn
	By another application of \cite[Eq.~(0.4)]{DGMS}, it follows that $\sigma_a(g) = r$.
	Now, consider the Dirichlet series $f$ given by
	\beqn
	f(s) &=& \zeta(s-r+1) (1 +g(s)).
	\eeqn
	Since both $\zeta(\cdot-r+1)$ and $g$ converge absolutely on $\mathbb H_r$, it follows from \cite[Theorem~4.3.1]{QQ} that $\sigma_a(f) \Le r.$ Let $(c_m)_{m=1}^\infty$ be the positive sequence determined uniquely by the relation
	\beq
	\label{coeff-i}
	f(s) &=& \Big(\sum_{m=1}^\infty m^{r-1}m^{-s}\Big) \Big(1+ \frac{2^{r-1-s}}{1 - 2^{-s}} + \frac{3^{r-1-s}}{1 - 3^{-s}} + \frac{5^{r-1-s}}{1 - 5^{-s}} + \cdots \Big) \notag\\
	&=& \sum_{m=1}^\infty c_m m^{-s}, \quad s \in \mathbb H_r.
	\eeq 
	Then, $c_p = 2 p^{r-1}$ for any prime $p,$ so that
	\beqn
	f(r) \Ge \sum_{j=1}^\infty c_{p_j} p_j^{-r} = 2 \sum_{j=1}^\infty \frac{1}{p_j} = \infty,
	\eeqn
	which implies that $f$ diverges at $r$ and hence, $\sigma_a(f) = r.$ 
	%
	It remains to verify that \(\delta_a(f) = 0\). To establish this, observe that for any integer $n \Ge 1$ and  $s \in \mathbb H_0$, we have  
	\beqn
	\sum\limits_{\substack{j = 1 \\ \tiny{\textbf{gpf}}(j) \Le p_n }}^\infty c_j j^{-s} = \zeta_n(s - r + 1) \left(1+ \frac{2^{r-1-s}}{1 - 2^{-s}} + \frac{3^{r-1-s}}{1 - 3^{-s}} + \cdots + \frac{p_n^{r-1-s}}{1 - p_n^{-s}}\right),
	\eeqn
	where \(\zeta_n\) is given by equation \eqref{zeta-n}.   
	From this, it follows immediately that $\delta_a(f) \Le 0$. Furthermore, since $c_{2^m} \Ge 2^{r-1}$ for all integers $m \Ge 1$, the series  
	\beqn
	\sum\limits_{\substack{j = 1 \\ \tiny{\textbf{gpf}}(j) \Le 2}}^\infty c_j = \sum_{m =0}^\infty c_{2^m}
	\eeqn
	diverges.  
	Thus, we conclude that \(\delta_a(f) = 0\), which implies that $\sigma_a(f) - \delta_a(f) = r$.  
	
	\noindent \textbf{Another approach to form a Dirichlet series for $r=0$}:
	To see a Dirichlet series satisfying $\sigma_a= \delta_a,$ let $g(s) = \sum\limits_{n=2}^\infty b_nn^{-s} \in \mathcal D$ be such that $b_n >0$ for all integers $n \Ge 2$ (this construction remains valid even if $\sigma_a(g)=-\infty$, meaning that $g$ is an entire Dirichlet series). Clearly, $g$ is a strictly decreasing function on $(\sigma_a(g), \infty)$ and $g(\sigma) \rar 0$ as $\sigma \rar \infty$. For $\alpha > 0,$ consider the function $f$ given by
	\beqn
	f(s): = \frac{1}{\alpha-g(s)}.
	\eeqn
	Assume that there exists $\rho \in (\sigma_a(g), \infty)$ such that $g(\rho) = \alpha.$ 
	Then, we claim that
	\beq \label{sigma=delta=rho}
	\text{$f \in \mathcal D$ and $\sigma_a(f) = \delta_a(f) = \rho.$}
	\eeq
	Note first that such a real number $\rho$ is unique as $g$ is injective, and $\alpha > 0.$ Since $g(\rho)= \alpha$ and $g$ is decreasing,  we have that $g(\rho + \epsilon) < \alpha$ for every $\epsilon > 0,$ and hence  
	\beqn
	|\alpha-g(s)| \Ge \alpha-g(\rho + \epsilon) > 0, \quad s \in \mathbb H_{\rho+ \epsilon}.
	\eeqn
	Also, since $g$ is absolutely convergent at $\rho,$ by \cite[Theorem~4.4.3]{QQ}, $f \in \mathcal D$ with $\sigma_a(f) \Le \rho,$ so that write 
	\beq
	\label{coeff-ii}
	f(s) = \sum_{n=1}^\infty c_n n^{-s}.
	\eeq 
	Furthermore, since $f$ blows up at $s = \rho,$ we get that $\sigma_a(f) = \rho.$ This also yields that $\delta_a(f) \Le \rho.$ 
	It now remains to verify that $\delta_a(f) \Ge \rho.$ 
	For every integer $n \Ge 1,$ let
	\beqn
	f_n(s): = \sum\limits_{\substack{j = 1 \\\tiny{\mbox{\bf gpf}}(j) \Le p_n }}^\infty c_j j^{-s} ~~\text{and}~~g_n(s): = \sum\limits_{\substack{j = 2 \\\tiny{\mbox{\bf gpf}}(j) \Le p_n }}^\infty b_j j^{-s}.
	\eeqn
	Then, for every $n \in \mathbb N,$ we have that $\sigma_a(f_n) \Le \sigma_a(f) = \rho$ and 
	\beq
	\label{fn-case}
	f_n(s) = \frac{1}{\alpha-g_n(s)}, \quad s \in \mathbb H_\rho.
	\eeq
	If $\sigma_a(f_m) = \rho$ for some $m \in \mathbb N,$ then it follows that $\sigma_a(f_n) \Ge \rho$ for all $n \Ge m,$ and hence by \eqref{eventually-equal-lemma}, $\delta_a(f) \Ge \rho,$ completing the claim. Otherwise, assume that $\sigma_a(f_n)<\rho$ for all $n \in \mathbb N.$ Let $x_0 < \sigma_a(g)$ be a real number very close to $\sigma_a(g).$ Since $g(x_0)$ is divergent, there exists a positive integer $n_0 \equiv n_0(x_0)$ such that 
	$g_n(x_0) > \alpha$ for all $n \Ge n_0.$ Note that $g \Ge g_n$ on $(\sigma_a(g), \infty)$, $g_n$ is decreasing and $g(\rho) = \alpha$. Thus, there exists a real number $\rho_n < \rho$ such that 
	$
	g_n(\rho_n) = \alpha,~ n \Ge n_0.
	$
	This implies that $g_n(\rho_n + \epsilon) < \alpha$ for every $\epsilon > 0$ as $g_n$ is a decreasing function, and hence  
	\beqn
	|\alpha-g_n(s)| \Ge \alpha-g_n(\rho_n +\epsilon) > 0, \quad s \in \mathbb H_{\rho_n+ \epsilon}.
	\eeqn
	Also, since $g_n$ is absolutely convergent at $\rho_n,$ we get that $\sigma_a(f_n) \Le \rho_n$ for all integers $n \Ge n_0.$ 
	Moreover, since $f_n$ blows up at $s = \rho_n$ (see \eqref{fn-case}), 
	\beqn
	\text{$\sigma_a(f_n) = \rho_n$ for all integers $n \Ge n_0.$}
	\eeqn
	Further,
	note that $\rho_n \Le \rho$ for all integers $n\Ge n_0$ and $\{\rho_n\}_{n=1}^\infty$ is an increasing sequence, which yields that $\rho_n \rar \rho_0 \Le \rho$ as $n \rar \infty.$ By using the fact that $g_n$ decreases, we get that
	\beq\label{decreasing-prop}
	g_n(\rho) \Le g_n(\rho_0) \Le g_n(\rho_n) =\alpha, \quad n \Ge n_0.
	\eeq
	Letting $n \rar \infty$ in \eqref{decreasing-prop} leads to $\lim_{n \rar \infty}g_n(\rho_0) = \alpha,$
	which yields that $g$ also converges at $\rho_0$ with $g(\rho_0) = \alpha.$ Thus, since $g$ is  injective over $(\sigma_a(g), \infty)$, it follows that $\rho_0= \rho,$ and hence $\rho_n$ converges to $\rho$ as $n \rar \infty.$ This combined with \eqref{eventually-equal-lemma} implies that $\delta_a(f) = \rho$ completing the proof of \eqref{sigma=delta=rho}. Therefore, we finally have that $\sigma_a(f) - \delta_a(f) = 0.$
	
	(ii) Let $r \Ge 1.$ For any prime $p$ and $m \in \mathbb N,$ define $c_{p^m} = p^{r-1}$ and extend it multiplicatively for all integers $n \Ge 1.$  
		Consider $f(s) = \sum_{n=1}^\infty c_n n^{-s}.$ Since $c_n > 0$ for all $n \in \mathbb N,$  
		\beqn
\sum_{n=1}^\infty  \frac{1}{p_n} = \sum_{n=1}^\infty c_{p_n} {p_n}^{-r}  \Le f(r),
		\eeqn
		which together with the fact that $\sum_{n=1}^\infty  \frac{1}{p_n}$ diverges (see, \cite[Eq.~(0.4)]{DGMS}) implies that $f$ also diverges at $s = r.$ Hence, 	
			\beq \label{one-estimate}
		\sigma_a(f) \Ge r.
		\eeq
		Moreover, since $r \Ge 1,$ we have that $c_{p^m} \Le p^{(r-1) m}$ for any prime $p$ and positive integer $m.$ This combined with multiplicativity of $\{c_n\}_{n \Ge 1}$ yields that $c_n \Le n^{r-1},~n \in \mathbb N,$ and hence for any $\sigma > r,$
		\beqn
		\sum_{n=1}^\infty c_n n^{-\sigma} \Le  \sum_{n=1}^\infty n^{-(\sigma+1 - r)} < \infty
		\eeqn
		implying that $\sigma_a(f) \Le r.$ Thus, by \eqref{one-estimate}, $\sigma_a(f) = r.$
		It now suffices to claim that $\delta_a(f) = 0.$ To see this, note first that for any integer $n \Ge 1,$
		\beqn
		 \prod_{j=1}^n \Big(1+\frac{p_j^{r-1-s}}{1-p_j^{-s}}\Big)
		\eeqn
		is a finite product of Dirichlet series absolutely convergent on $\mathbb H_0.$
		This combined with the multiplicativity of $\{c_n\}_{n \Ge 1}$ and the fundamental theorem of arithmetic yields that
		\beqn
		\sum\limits_{\substack{j = 1 \\\tiny{\mbox{\bf gpf}}(j) \Le p_n }}^\infty c_j j^{-s} =  \prod_{j=1}^n \Big(1+\frac{c_{p_j} p_j^{-s}}{1-p_j^{-s}}\Big) 
		= \prod_{j=1}^n \Big(1+\frac{p_j^{r-1-s}}{1-p_j^{-s}}\Big)
		\eeqn
		converges on $\mathbb H_0$ implying that
 $\delta_a(f) \Le 0.$ Also, since $\sum_{n=0}^\infty c_{2^n}$ is a divergent series, we obtain that $\delta_a(f) = 0.$ Finally, $\sigma_a(f) - \delta_a(f) = r.$
\end{proof}
    
    \begin{remark}
    	A careful analysis of the proof of Theorem~\ref{arbitrary-large}(ii) reveals that even if $r \Ge 0$, applying the same approach as in Theorem~\ref{arbitrary-large}(ii) yields $\delta_a(f) = 0$ and $\sigma_a(f) \Ge r$. However, determining the exact value of $\sigma_a(f)$ relied crucially on the assumption that $r \Ge 1$. Consequently, for the remaining case where $r \in [0,1)$, we do not know the precise value of $\sigma_a(f)$.
    	
    \end{remark}
    
    We now present a family of examples illustrating the second approach employed in the proof of Theorem~\ref{arbitrary-large}(i). A particular case of this has previously been presented in \cite{Mc-1}.	
    \begin{example}\label{example-inte}
    	For any integer $m \Ge 1,$ let $\rho_m \in (1, \infty)$ be the unique real number satisfying $\zeta^m(\rho_m) = 2.$ By \cite[Equation~1.2.2]{Ti},
    	\beqn
    	f_m(s) = \frac{1}{2- \zeta^m(s)} = \sum_{n=1}^\infty d_m(n) n^{-s}, \quad s, u \in \mathbb H_{\rho_m},
    	\eeqn
    	where, for every integer $n \Ge 2,$ $d_m(n)$ denotes the number of ways of expressing $n$ as a product of $m$ factors $($the order also matters$).$ Therefore, by  Theorem~\ref{arbitrary-large}(i), $\sigma_a(f_m) = \delta_a(f_m) = \rho_m.$
    \end{example}
\section{An application of Theorem~\ref{arbitrary-large}}

 For $\{a_n\}_{n=1}^\infty \subseteq (0, \infty),$ let $f(s) = \sum_{n=1}^\infty a_n n^{-s}$ with $\sigma_a(f) < \infty.$ With this sequence, we now associate a positive semi-definite Dirichlet series kernel $\kappa$ given by 
 	\beq
 	 \label{kappa-d}
 	\kappa(s, u) = \sum_{n=1}^\infty a_n n^{-s -\overline{u}},\quad s, u \in \mathbb H_{\frac{\sigma_a(f)}{2}}.
 	\eeq
 	Clearly, $\kappa$ converges absolutely on $\mathbb H_{\frac{\sigma_a(f)}{2}} \times \mathbb H_{\frac{\sigma_a(f)}{2}},$ and does not extend on any larger domain.
 	By the Moore's theorem \cite[Theorem~2.14]{P-R}, there exists an unique reproducing kernel Hilbert space $\mathscr H_\kappa$ associated with $\kappa,$ which is given by 
 	\beqn
 	\mathscr H_\kappa = \Big\{\sum_{n=1}^\infty b_n n^{-s} ~~:~~ \sum_{n=1}^\infty \frac{|b_n|^2}{a_n} < \infty\Big\}.
 	\eeqn
 	By Cauchy-Schwarz inequality, every function in $\mathscr H_\kappa$ converges absolutely on $\mathbb H_{\frac{\sigma_a(f)}{2}},$ and hence \beqn
 	\sup\{\sigma_a(h) : h \in \mathscr H_\kappa\} \Le \frac{\sigma_a(f)}{2}.
 	\eeqn
 	Furthermore, $\kappa(\cdot, \frac{\sigma_a(f)}{2}+\epsilon) \in \mathscr H_\kappa$ for any $\epsilon > 0,$ which admits a pole at $s = \frac{\sigma_a(f)}{2}-\epsilon.$ Thus, 
 	\beq
 	 \label{abs_H_d}
 	\sup\{\sigma_a(h) : h \in \mathscr H_\kappa\} = \frac{\sigma_a(f)}{2}
 	\eeq
 	implying that the common domain of analyticity of $\mathscr H_\kappa$ (the largest right half-plane where all series from $\mathscr H_\kappa$ converges) is $\mathbb H_{\frac{\sigma_a(f)}{2}}.$ 
 	The following is a well known fact related to the multipliers of $\mathscr H_\kappa$ (see, \cite[Theorem~3.1]{St}).
 	\begin{proposition}\label{stetler_result_repeat}
 		Let $\kappa$ be the kernel defined by \eqref{kappa-d}. Then, the multiplier algebra  
 		$\mathcal M(\mathscr H_\kappa)$ of $\mathscr H_\kappa$ is contractively included in $H^\infty(\mathbb H_{\frac{\delta_a(f)}{2}}) \cap \mathcal D.$
 	\end{proposition}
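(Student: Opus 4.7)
The plan is to show that every multiplier $\varphi$ of $\mathscr H_\kappa$ is a Dirichlet series in $\mathcal D$ whose natural representation extends to a bounded holomorphic function on $\mathbb H_{\delta_a(f)/2}$ with $\|\varphi\|_\infty \Le \|M_\varphi\|$. First, I would observe that the constant function $1$ lies in $\mathscr H_\kappa$ (since $1/a_1 < \infty$), so $\varphi = M_\varphi(1) \in \mathscr H_\kappa$ admits a Dirichlet expansion $\varphi(s) = \sum_{d \Ge 1} \varphi_d d^{-s}$; applying Cauchy--Schwarz to $\sum_d |\varphi_d|^2/a_d < \infty$ and $\sum_d a_d d^{-2\Re s} < \infty$ then yields absolute convergence on $\mathbb H_{\sigma_a(f)/2}$, and hence $\varphi \in \mathcal D$.

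To push $\varphi$ past the smaller abscissa $\sigma_a(f)/2$, I would exploit, for each $n \Ge 1$, the closed subspace $\mathscr H_{\kappa_n} \subseteq \mathscr H_\kappa$ spanned by $\{j^{-s} : \text{\bf gpf}(j) \Le p_n\}$. Its reproducing kernel is $\kappa_n(s,u) = \sum_{\text{\bf gpf}(j) \Le p_n} a_j j^{-s-\overline u}$, and the argument leading to \eqref{abs_H_d} shows that the common domain of analyticity of $\mathscr H_{\kappa_n}$ is $\mathbb H_{\sigma_a(f_n)/2}$; by Lemma~\ref{eventually-equal-lemma}, $\sigma_a(f_n) \Le \delta_a(f)$ for every $n$, so $\mathbb H_{\delta_a(f)/2} \subseteq \mathbb H_{\sigma_a(f_n)/2}$. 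Setting $\varphi_n(s) := \sum_{\text{\bf gpf}(d) \Le p_n} \varphi_d d^{-s}$, a direct Dirichlet-convolution calculation --- exploiting that $\{j : \text{\bf gpf}(j) \Le p_n\}$ is closed under both divisors and multiplication --- will show $\varphi_n h = P_n(\varphi h)$ for every $h \in \mathscr H_{\kappa_n}$, where $P_n$ is the orthogonal projection onto $\mathscr H_{\kappa_n}$. Consequently $\varphi_n$ is a multiplier of $\mathscr H_{\kappa_n}$ with $\|M_{\varphi_n}\| \Le \|M_\varphi\|$.

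From the standard reproducing-kernel identity $M_{\varphi_n}^* \kappa_n(\cdot, u) = \overline{\varphi_n(u)}\,\kappa_n(\cdot, u)$ I would then deduce $|\varphi_n(u)| \Le \|M_\varphi\|$ for every $u \in \mathbb H_{\sigma_a(f_n)/2}$, and hence uniformly in $n$ on $\mathbb H_{\delta_a(f)/2}$. The family $\{\varphi_n\}$ is therefore normal on $\mathbb H_{\delta_a(f)/2}$, while on the smaller half-plane $\mathbb H_{\sigma_a(f)/2}$ dominated convergence gives $\varphi_n(u) \to \varphi(u)$ pointwise. Vitali's theorem will then produce a bounded holomorphic extension $\widetilde\varphi$ of $\varphi$ to $\mathbb H_{\delta_a(f)/2}$ satisfying $\|\widetilde\varphi\|_\infty \Le \|M_\varphi\|$, which is precisely the contractive inclusion claimed.

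The main obstacle I anticipate is the compression identity $\varphi_n h = P_n(\varphi h)$ for $h \in \mathscr H_{\kappa_n}$: it hinges on the combinatorial fact that $\{j : \text{\bf gpf}(j) \Le p_n\}$ is closed under both divisors (so that $P_n$ interacts cleanly with Dirichlet convolution) and products (so that $\varphi_n h$ lands back in $\mathscr H_{\kappa_n}$). Without this closure property one cannot realize $M_{\varphi_n}$ as a compression of $M_\varphi$, and the entire argument collapses. Everything else --- the inclusion of half-planes via Lemma~\ref{eventually-equal-lemma} and the normal-family limit procedure --- is essentially routine.
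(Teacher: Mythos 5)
Your argument is correct, but note that the paper itself offers no proof of this proposition: it is stated as a known fact and attributed to \cite[Theorem~3.1]{St}, so the only comparison possible is with that cited argument, of which yours is essentially the standard form (compression to the prime-restricted subspaces, multiplier bound via the kernel eigenvector identity, then a normal-family limit). The two points you flag as delicate do hold. First, the compression identity $\varphi_n h = P_n(\varphi h)$ for $h \in \mathscr H_{\kappa_n}$ is a coefficient-by-coefficient computation whose rearrangements are justified because $\varphi$ (being $M_\varphi 1 \in \mathscr H_\kappa$) and $h$ both converge absolutely on $\mathbb H_{\sigma_a(f)/2}$; it uses exactly the closure of $S_n = \{j : \text{\bf gpf}(j) \Le p_n\}$ under divisors and under multiplication, both of which are immediate. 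Second --- and this is the one step you should spell out rather than leave implicit --- the pointwise bound $|\varphi_n(u)| \Le \|M_{\varphi_n}\|$ on the \emph{larger} half-plane $\mathbb H_{\sigma_a(f_n)/2}$ requires knowing that $\mathscr H_{\kappa_n}$ is a reproducing kernel Hilbert space on that half-plane and that $M_{\varphi_n}$ acts there as multiplication by the extended function: concretely, for $\Re(u) > \sigma_a(f_n)/2$ one must check that $\kappa_n(\cdot,u) \in \mathscr H_{\kappa_n}$, that $\langle h, \kappa_n(\cdot,u)\rangle = h(u)$, and that $(\varphi_n h)(u) = \varphi_n(u)h(u)$; all three follow from Cauchy--Schwarz against $f_n(2\Re(u)) < \infty$, i.e., from the same estimate behind \eqref{abs_H_d}, together with the positivity $\kappa_n(u,u) \Ge a_1 > 0$. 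With that in place, Lemma~\ref{eventually-equal-lemma} (applied with $m=1$) gives $\sigma_a(f_n) \Le \delta_a(f)$ for every $n$, so the bound $|\varphi_n| \Le \|M_\varphi\|$ is uniform on $\mathbb H_{\delta_a(f)/2}$; dominated convergence gives $\varphi_n \to \varphi$ pointwise on $\mathbb H_{\sigma_a(f)/2}$, and Vitali's theorem then yields a holomorphic extension of $\varphi$ to $\mathbb H_{\delta_a(f)/2}$ bounded by $\|M_\varphi\|$, which together with $\varphi \in \mathcal D$ is precisely the asserted contractive inclusion.
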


 As a consequence of Theorem~\ref{arbitrary-large} and Proposition~\ref{stetler_result_repeat}, we gain some insights into the structure of the multiplier algebra.

    \begin{corollary}
    For any non-negative real number $r,$ let $f$ be the Dirichlet series formed in the proof of Theorem~\ref{arbitrary-large}. Let $\kappa: \mathbb H_r \times \mathbb H_r \rar \mathbb C$ be the kernel given by
    \beqn
    \kappa(s, u) = f(s+\overline{u}) = \sum_{m=1}^\infty c_m m^{-s-\overline{u}}, \quad s, u \in \mathbb H_{\frac{r}{2}}.
    \eeqn 
    \begin{itemize}
    	\item [$(i)$] If $\{c_m\}_{m \Ge 1}$ is determined either by \eqref{coeff-i} $($with $r \in [0,1))$ or \eqref{coeff-ii}, then $\mathcal M(\mathscr H_\kappa) \subsetneq H^\infty(\mathbb H_{0}) \cap \mathcal D.$
    	\item [$(ii)$] If $\{c_m\}_{m \Ge 1}$ is the multiplicative sequence formed as in proof (ii) of Theorem~\ref{arbitrary-large}, then  
   $\mathcal M(\mathscr H_\kappa) = H^\infty(\mathbb H_0) \cap \mathcal D$ (isometrically).
    \end{itemize} 
    \end{corollary}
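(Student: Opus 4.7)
The inclusion $\mathcal M(\mathscr H_\kappa) \subseteq H^\infty(\mathbb H_0) \cap \mathcal D$ with contractive embedding holds in both (i) and (ii) as an immediate consequence of Proposition~\ref{stetler_result_repeat} combined with the identity $\delta_a(f) = 0$ established in the proof of Theorem~\ref{arbitrary-large}. What remains is to decide when this inclusion is isometric and when it is strict.

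For (ii), the plan is to exploit the multiplicativity $c_{p^k} = p^{r-1}$ (for $k \Ge 1$) via the Euler product
\[
\kappa(s, u) = \prod_{p \text{ prime}} \kappa_p(p^{-s}, p^{-\overline{u}}), \qquad \kappa_p(z, w) = 1 + \frac{p^{r-1}\, z \overline{w}}{1 - z \overline{w}}.
\]
Under the Bohr lift $n = \prod p_i^{\beta_i} \leftrightarrow z^\beta$, identify $H^\infty(\mathbb H_0) \cap \mathcal D$ isometrically with $H^\infty(\mathbb D^\infty)$ and $\mathscr H_\kappa$ with the completed tensor product $\bigotimes_p H_p$; each factor satisfies $\|f\|^2_{H_p} = p^{-(r-1)}\|f\|^2_{H^2(\mathbb D)} + (1 - p^{-(r-1)})|f(0)|^2$, valid because $r \Ge 1$. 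Recasting this factorwise as integration against the probability measure $d\mu_p = p^{-(r-1)} dm + (1 - p^{-(r-1)})\delta_0$ on $\overline{\mathbb D}$, one obtains
\[
\|f\|^2_{\mathscr H_\kappa} = \int_{\overline{\mathbb D}^\infty} |f|^2 \, d\nu, \qquad d\nu = \prod_p d\mu_p,
\]
and since every $\tilde\phi \in H^\infty(\mathbb D^\infty)$ is $\nu$-essentially bounded by $\|\phi\|_\infty$, multiplication by $\tilde\phi$ is contractive on $L^2(\nu)$ and hence on the closed subspace $\mathscr H_\kappa$. This gives $\|M_\phi\|_{\mathcal M} \Le \|\phi\|_\infty$. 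The matching lower bound follows from the reproducing kernel identity $M_\phi^* k_z = \overline{\phi(z)} k_z$ on $\mathbb D^\infty$ combined with $\|\tilde\phi\|_{H^\infty(\mathbb D^\infty)} = \sup_{z \in \mathbb D^\infty} |\tilde\phi(z)|$, yielding isometric equality.

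For (i), the plan is to exhibit $\phi \in H^\infty(\mathbb H_0) \cap \mathcal D$ that is not a multiplier of $\mathscr H_\kappa$, which forces strict containment. From the Dirichlet convolution $f = \zeta(\cdot - r + 1)(1 + g)$ in \eqref{coeff-i} one derives
\[
c_n = n^{r-1}\Big(1 + \sum_{p^k \mid n,\, k \Ge 1} p^{(1-k)(r-1)}\Big),
\]
and from the geometric expansion $f = \sum_{k \Ge 0} \alpha^{-(k+1)} g^{k}$ in \eqref{coeff-ii} one obtains $c_n$ as an iterated Dirichlet convolution of the coefficients of $g$. In neither case does $\{c_n\}$ admit a multiplicative factorization, so the integral-representation argument of (ii) collapses. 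I would then construct a witness $\phi = \sum \phi_n n^{-s}$ whose coefficients cause $\sum_n |\phi_n|^2/c_n = \infty$ while $\|\phi\|_{H^\infty(\mathbb H_0)}$ remains finite; since $\phi \cdot 1 = \phi$, this gives $\phi \cdot 1 \notin \mathscr H_\kappa$, hence $\phi \notin \mathcal M(\mathscr H_\kappa)$.

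The main obstacle lies in (i): the explicit formulas above show that single-monomial candidates $\phi = n^{-s}$ have multiplier norm bounded by a power of $n$, so no prime-power alone suffices; the witness must use infinitely many Dirichlet coefficients, and balancing $H^\infty$-boundedness against the divergence of $\sum_n |\phi_n|^2/c_n$ demands asymptotic control of $1/c_n$ along carefully chosen sequences of integers. In (ii) the (far milder) technical point is justifying the factor-by-factor passage to the infinite product measure via Kolmogorov's extension theorem and verifying that the analytic subspace $\mathscr H_\kappa$ sits closedly inside $L^2(\nu)$.
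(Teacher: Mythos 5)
Part (i) of your proposal contains the genuine gap. You correctly reduce strict containment to exhibiting some $\phi \in H^\infty(\mathbb H_0) \cap \mathcal D$ with $\sum_n |\phi_n|^2/c_n = \infty$ (equivalently $\phi \notin \mathscr H_\kappa$, via $\phi = \phi \cdot 1$ and $\mathcal M(\mathscr H_\kappa) \subseteq \mathscr H_\kappa$), but you then leave the construction of $\phi$ entirely open, conceding it is ``the main obstacle.'' This is not a detail one can finesse by elementary coefficient balancing: a Dirichlet series bounded on $\mathbb H_0$ whose coefficients are too large to lie in $\mathscr H_\kappa$ is precisely the kind of object whose existence is the Bohr--Bohnenblust--Hille theorem. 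The paper avoids any construction by an abscissa comparison: by Cauchy--Schwarz every $h \in \mathscr H_\kappa$ satisfies $\sigma_a(h) \Le \sigma_a(f)/2 = r/2$ (see \eqref{abs_H_d} and \eqref{sigma-delta}), while $\sup\{\sigma_a(h) : h \in H^\infty(\mathbb H_0) \cap \mathcal D\} = \tfrac12$ by \cite[Theorem~4.1]{DGMS}; since $r \in [0,1)$ forces $r/2 < \tfrac12$, there exists a bounded Dirichlet series $\phi$ with $\sigma_a(\phi) > r/2$, which therefore cannot lie in $\mathscr H_\kappa$, let alone in $\mathcal M(\mathscr H_\kappa)$. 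Note that this is the only place the hypothesis $r \in [0,1)$ enters; your sketch never invokes it, which signals that the intended mechanism is missing. Your explicit formulas for $c_n$ (which are correct) are beside the point: only $\sigma_a(f) = r$ matters, not the finer arithmetic structure of the coefficients, and the witness should be imported from the cited theorem rather than built by hand.

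Part (ii) is a different route from the paper, which simply quotes \cite[Corollary~4.2]{St} (or \cite[Theorem~4.1]{Sa}) together with \eqref{sigma-delta}. Your plan---Euler factorization of the kernel, realization of each factor norm as $L^2(\mu_p)$ with $\mu_p = p^{-(r-1)}dm + (1-p^{-(r-1)})\delta_0$ (this is exactly where $r \Ge 1$ is needed so that the weights are nonnegative), the product measure $\nu$, contractivity of multiplication by bounded analytic functions on $L^2(\nu)$, and the kernel-eigenvector lower bound---is in substance a re-derivation of the result the paper cites (McCarthy's argument as extended in Stetler's thesis). It can be completed, but the technical debts you acknowledge are real: one must define $\tilde\phi$ $\nu$-almost everywhere (the support of $\nu$ has finitely many coordinates on the torus, so one needs boundary values there, via the Bohr correspondence between $H^\infty(\mathbb H_0)\cap\mathcal D$ and bounded analytic functions on the infinite polydisc), and verify that $\mathscr H_\kappa$ embeds isometrically as a closed subspace of $L^2(\nu)$. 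Also, your proposed lower bound is literally the contractive inclusion of Proposition~\ref{stetler_result_repeat}, so it need not be re-proved. In short: your treatment of the inclusion and of (ii) is sound (the latter more laborious than the paper's citation), but (i) as written is a plan whose crucial step is missing.
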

	\begin{proof}
	 By Theorem~\ref{arbitrary-large}, we have that 
	 \beq
	 \label{sigma-delta}
	\text{ $\sigma_a(f) = r$ and $\delta_a(f) = 0$.}
	\eeq 
	 
	 (i)  By Proposition~\ref{stetler_result_repeat}, we already know that $\mathcal M(\mathscr H_\kappa) \subseteq H^\infty(\mathbb H_{0}) \cap \mathcal D.$ To see the strict containment, since $r = \sigma_a(f) \in [0,1)$ and $\mathcal M(\mathscr H_\kappa) \subseteq \mathscr H_\kappa$, it follows from \eqref{abs_H_d} and \eqref{sigma-delta} that
	\beqn
	\sup\{\sigma_a(h) : h \in \mathcal M(\mathscr H_\kappa)\}
	&\Le& 
	\sup\{\sigma_a(h) : h \in \mathscr H_\kappa\}\\
	&=& \frac{r}{{2}}\\
	 &<& \frac{1}{2} = \sup\{\sigma_a(h) : h \in H^\infty(\mathbb H_{0}) \cap \mathcal D\},
	\eeqn
	where the last equality holds by an application of \cite[Theorem~4.1]{DGMS}.
	Therefore, $\mathcal M(\mathscr H_\kappa) \subsetneq H^\infty(\mathbb H_{0}) \cap \mathcal D.$
	
	(ii) By assumption, $c_{p^m} = p^{r-1}$ which extends multiplicatively for all integers $n \Ge 1.$ Then, \cite[Corollary~4.2]{St} (also, see \cite[Theorem~4.1]{Sa}) combined with the condition \eqref{sigma-delta} yields the desired conclusion.
	\end{proof}

\vskip.4cm 
\textit{Acknowledgment:}
The author gratefully acknowledges the support received from the Indian Institute of Technology Bombay through an Institute post-doctoral fellowship. The author is grateful to O. F. Brevig for some helpful discussions and to H. Ahmed for carefully reading the manuscript. Finally, we thank the anonymous referee for some suggestions that improved the original presentation.
\vskip.3cm

\end{document}